\documentclass[a4paper,12pt]{amsart}

\pdfoutput=1
\usepackage{setspace}
\usepackage[left=22mm,head=30mm,bottom=30mm,foot=20mm,right=22mm]{geometry}
\usepackage{amsmath,amsthm,amsfonts,amssymb,mathtools,mathrsfs,url,bm,enumitem,stackengine}
\usepackage{newtxtext}
\usepackage[varvw]{newtxmath}
\allowdisplaybreaks
\usepackage{hyperref}
\hypersetup{
	colorlinks=true,
	linkcolor=blue,
	citecolor=red,
}
\usepackage{appendix}

\mathtoolsset{showonlyrefs=true}
\theoremstyle{definition}
\newtheorem{definition}{Definition}[section]

\newtheorem{remark}{Remark}[section]

\newtheorem{conjecture}{Conjecture}[section]

\theoremstyle{plain}

\newtheorem{theorem}{Theorem}[section]

\newtheorem{proposition}{Proposition}[section]

\newtheorem{lemma}{Lemma}[section]
\def\R{\mathbb{R}}

\renewcommand{\phi}{\varphi}
\newcommand\restr[2]{{
		\left.\kern-\nulldelimiterspace 
		#1 
		\vphantom{\big|} 
		\right|_{#2} 
}}

\let\phi\varphi
\let\epsilon\varepsilon

\usepackage{tikz-cd}
\tikzset{
	symbol/.style={
		draw=none,
		every to/.append style={
			edge node={node [sloped, allow upside down, auto=false]{$#1$}}}
	}
}

\def\R{\mathbb R}

\def\proscal3#1#2{<\!\!#1, #2\!\!>_{_{{\hskip-4pt\R^{3}}}}}

\def\vu{ u }
\def\vn{ \nabla }

\def\l2{L^2(\R^{n})}
\def\L2{L^2(\R^{2n})}
\def\supp{\operatorname{Supp}}

\def\mat22#1#2#3#4{\begin{pmatrix}#1&#2\\ #3&#4\end{pmatrix}}

\def\Rt{\mathbb{R}^d}

\begin{document}

	\title[Liouville Theorems for SNS via the Radial Velocity Component]{Liouville Theorems for Stationary Navier-Stokes Equations via the Radial Velocity Component}

	
	\author{Gast\'{o}n Vergara-Hermosilla}
\address{Institute for Theoretical Sciences, Westlake University, 310030 Hangzhou, Zhejiang, People's
Republic of China}
\email{gaston.v-h@outlook.com}	
	
	\maketitle

\begin{abstract}
We study Liouville-type results for the stationary Navier--Stokes equations in $\mathbb{R}^3$. We prove that any $\dot{H}^1(\mathbb{R}^3)$ solution is trivial under an integrability condition imposed only on the radial component of the velocity, namely $u_\rho(x) \in L^p(\mathbb{R}^3)$ with $3/2 < p \leq 3$. We also establish a uniqueness result in a variable-exponent setting, where an $L^6$-type condition is required only on a bounded region, while the exponent approaches the critical value $3$ at infinity. Our analysis reveals that the rigidity of the stationary Navier--Stokes system can be driven by localized and  radial  integrability properties, rather than uniform global conditions.
\end{abstract}

 \section{Introduction and main results}
 
 	In this note we consider the stationary Navier-Stokes equations 
	\begin{equation}\label{SNS}
		\begin{cases}
			-\Delta u+  ( u\cdot \nabla ) u +\nabla \pi  =0,
			\\
			\quad \nabla \cdot u=0, 
		\end{cases}
		\quad 
		\text{in }\R^3,
	\end{equation} 
 where $\vu $ stands the velocity vector field and $\pi  $ denotes the  pressure. The construction of solutions $(\vu, \pi)$ for this equation in the spaces $\dot{H}^1(\mathbb{R}^3) \times \dot{H}^{\frac{1}{2}}(\R^3) $  is classical (see \cite[Theorem 16.2]{lemarie2016navier}), however  uniqueness of solutions is  until now a challenging and open problem. This motivates the following conjecture, which has been originally proposed by G. Galdi in \cite[Remark X.9.4 and Theorem X.9.5]{galdi2011introduction} and by G. Seregin in \cite{Ser2016}):
\begin{conjecture}
Show that any solution $\vu$ of (\ref{SNS}) fulfilling the conditions
\begin{equation}\label{Conjecture}
\vu \in \dot{H}^1(\R^3) \qquad\mbox{and}\qquad \vu(x)\to 0 \mbox{ as } |x|\to +\infty,
\end{equation}
is identically equal to zero.
\end{conjecture}
Note that the Liouville problem in the two-dimensional case was established by Gilbarg and Weinberger in \cite{GW78}, while the four-dimensional case was proved by G. Galdi in \cite{galdi2011introduction}. The three-dimensional case, however, remains widely open and has attracted significant interest within the mathematical fluid mechanics community.
On the other hand, by Sobolev embedding, any solution
 $u \in \dot{H}^1(\mathbb{R}^3)$ also belongs to $L^6(\mathbb{R}^3)$ which already implies a certain decay as $x\to \infty$. However, this information alone does not appear to be sufficient to deduce the triviality of the solution. Over the years, several partial results related to Conjecture \ref{Conjecture} have been obtained, showing that additional integrability or structural assumptions ensure that $u=0$. In this direction  G.~Galdi  proved in \cite{galdi2011introduction} that $u \in L^{9/2}(\mathbb{R}^3)$ in fact implies  that $u \equiv 0$. This result was later refined by D.~Chae and J.~Wolf in \cite{ChaeWolf}, who obtained a logarithmic improvement of the $9/2$ condition.    N.~Lerner observed in \cite{Lerner26} that the global $L^{9/2}$ assumption can be relaxed by separating low and high frequencies. More precisely, he showed that it suffices to assume $u_{[0]} \in L^{9/2}(\R^3)$, with  $u_{[0]}$ denoting a projection of $u$ onto the subspace of vector fields whose Fourier support contains a neighborhood of the origin. More recently, the author of this paper  proved in \cite{V2026} that triviality already follows under assumptions of the form $u \in L^{9/2 + \varepsilon(\cdot)}(\mathbb{R}^3)$, where $\varepsilon(\cdot)>0$.  For more results about Liouville type theorems, we refer to the reader to Chapter 16 in the book of P.G. Lemari\'e-Rieusset \cite{lemarie2016navier} and the references therein.\\ 

To the best of the author’s knowledge, most Liouville-type results in the literature are isotropic, i.e., invariant under rotations of the coordinate system, while anisotropic results remain scarce. A seminal work in this  direction is due to D. Chae \cite{Chae23}, who proved uniqueness by considering sufficient conditions depending on the direction of coordinates and each component of the velocity on mixed norm Lebesgue spaces. This results was extended by Z. Zhang and Q. Zu in \cite{ZZ25}. 
In this paper, we study new sufficient conditions for Liouville-type theorems that depend explicitly only on  the radial component of the velocity field, thereby offering a different perspective on this problem.
 
We have to new theorems:
 \begin{theorem}\label{thm1} 
 Let $\vu  \in \dot{H}^1(\mathbb{R}^3) $ be a solution of (\ref{SNS})
 and let $u_\rho(x)$ be the radial component of such solution in spherical coordinates. 
 Suppose in addition that
 $u_\rho \in L^{p}(\mathbb{R}^3)$ with $3/2<p\leq 3$ .
 Then, $u $ is identically equal to zero.
\end{theorem}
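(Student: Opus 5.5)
The plan is the classical cut-off energy method, with the cut-off chosen \emph{radial} so that only the radial component of the velocity enters the nonlinear flux term. We use two standard facts about $\dot H^1$ solutions of (\ref{SNS}). First, such solutions are smooth (classical elliptic regularity). Second, the pressure can be normalized as $\pi=\sum_{i,j}\mathcal R_i\mathcal R_j(u_iu_j)$, where $\mathcal R_i$ are the Riesz transforms. Since $u\in L^6(\R^3)$ by Sobolev embedding, $|u|^2\in L^3$, and Calder\'on--Zygmund boundedness gives $\pi\in L^3(\R^3)$.

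Fix $\varphi\in C_c^\infty([0,\infty))$ with $\varphi=1$ on $[0,1]$ and $\varphi=0$ on $[2,\infty)$. Set $\varphi_R(x)=\varphi(|x|/R)$ and let $A_R=\{R<|x|<2R\}$ be the annulus where $\nabla\varphi_R$ is supported. Testing the momentum equation against $u\varphi_R$ and using $\nabla\cdot u=0$ gives the local energy identity
\begin{equation}
\int_{\R^3}|\nabla u|^2\varphi_R\,dx=\frac12\int_{\R^3}|u|^2\Delta\varphi_R\,dx+\int_{\R^3}\Big(\frac{|u|^2}{2}+\pi\Big)\,u\cdot\nabla\varphi_R\,dx .
\end{equation}
The key observation is that $\nabla\varphi_R(x)=R^{-1}\varphi'(|x|/R)\,\frac{x}{|x|}$. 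Hence
\[
u\cdot\nabla\varphi_R=R^{-1}\varphi'(|x|/R)\,u_\rho,
\]
so the cubic term involves only $u_\rho$.

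We then let $R\to\infty$ and bound each term.
\begin{itemize}
\item \emph{Linear term.} Since $|\Delta\varphi_R|\lesssim R^{-2}\mathbf 1_{A_R}$ and $|A_R|\sim R^3$, H\"older gives
\[
R^{-2}\int_{A_R}|u|^2\,dx\lesssim R^{-2}\,\|u\|_{L^6(A_R)}^2\,|A_R|^{2/3}\lesssim\|u\|_{L^6(A_R)}^2\to0 .
\]
\item \emph{Nonlinear term.} Apply H\"older with exponents $3$, $p$ and $r$, where $1/r=1-1/3-1/p$; this requires $p\ge 3/2$, which holds since $p>3/2$. We get
\[
R^{-1}\int_{A_R}\big(|u|^2+|\pi|\big)|u_\rho|\,dx\lesssim R^{-1}\big\||u|^2+|\pi|\big\|_{L^3(A_R)}\,\|u_\rho\|_{L^p(A_R)}\,R^{3(2/3-1/p)} .
\]
The total power of $R$ is $R^{1-3/p}$, which is at most $1$ precisely when $p\le 3$.
\end{itemize}
Both $\||u|^2+|\pi|\|_{L^3(A_R)}$ and $\|u_\rho\|_{L^p(A_R)}$ tend to zero as $R\to\infty$, because they are tails of globally integrable functions. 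So the right-hand side tends to $0$ for every $3/2<p\le3$, including the endpoint $p=3$, where there is no help from the power of $R$. By monotone convergence on the left-hand side, $\int_{\R^3}|\nabla u|^2\,dx=0$. Thus $u$ is constant, and since $u\in L^6(\R^3)$, $u\equiv0$.

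The main obstacle is rigor rather than new estimates. One must check that $u_\rho=u\cdot x/|x|$ is well defined; it is, as a measurable function, and it is smooth away from the origin. The origin is harmless because $\nabla\varphi_R$ vanishes near it. One must also check that the identity above is justified for $\dot H^1$ solutions, which follows from their smoothness. The delicate point is the endpoint $p=3$. There the scaling gives no decay, and one must rely on the vanishing of the $L^3$ tails of $|u|^2+|\pi|$ and the $L^3$ tail of $u_\rho$. This is exactly why the pressure estimate $\pi\in L^3$, obtained from the Riesz-transform representation, is essential.
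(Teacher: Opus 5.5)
Your proposal is correct and follows essentially the same route as the paper: a radial cut-off so that $u\cdot\nabla\varphi_R$ involves only $u_\rho$, H\"older with $\tfrac13+\tfrac1p+\tfrac1r=1$ giving the factor $R^{1-3/p}\le 1$ for $3/2<p\le 3$, and $\pi\in L^3$ from the Riesz-transform representation. The only difference is cosmetic: you bound $|u|^2+|\pi|$ together in $L^3(A_R)$, whereas the paper splits the flux into the $|u|^2u_\rho$ and $\pi u_\rho$ terms and estimates them separately.
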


\begin{remark}
This result establishes the uniqueness of steady solutions under a weaker additional integrability assumption compared to known isotropic and anisotropic Liouville-type results for the stationary Navier--Stokes equations. 
More precisely, Theorem \ref{thm1} imposes sufficient conditions only on the radial component of the velocity field, without requiring any restrictions on the remaining two components.
\end{remark}

\begin{remark}
An interesting case in the Liouville theorem of D. Chae in \cite{Chae23} corresponds to the assumption (where 
$\tilde x_1 = (x_2, x_3), \ \tilde x_2= (x_1, x_3), \ \tilde x_3 = (x_1, x_2)$) 
\[
u_i \in \big(L_{x_i}^{\frac{3}{2}} \cap L_{x_i}^{6}\big) L_{\tilde x_i}^{6}(\mathbb{R} \times \mathbb{R}^2), 
\quad \text{for each cartesian coordinate indexed with } i = 1,2,3.
\]
This condition reflects an anisotropic decay, combining  mild integrability 
in planar directions with stronger decay in the orthogonal direction, and is sufficient 
to ensure the triviality of the solution.
By contrast, Theorem \ref{thm1} relies on the assumption 
$u_\rho \in L^{p}(\mathbb{R}^3)$ with $3/2 < p \leq 3$. In particular, this involves 
a range of integrability exponents, and only on a single component 
of the velocity field. 
\end{remark}

\begin{remark}
A particular case of our result corresponds to the assumption 
$u_\rho \in L^{3}(\mathbb{R}^3)$. At a heuristic level, this can be compared with the classical 
a priori condition $u \in L^{9/2}(\mathbb{R}^3)$ due to G.~Galdi 
\cite{galdi2011introduction};   
the latter integrability exponent naturally arises by applying Hölder's inequality 
to an integral term involving a cubic power of $|u|$, 
which leads to the balance condition
\[
\frac{2}{3} = \frac{3}{p},
\]
whose unique solution is $p = \frac{9}{2}$.
In contrast, our approach relies on a different distribution of the analogous integral term, 
involving two factors of the full velocity field $u$ and one factor of its radial 
component $u_\rho$. This leads, via Hölder's inequality, to the relation
\[
\frac{2}{3} = \frac{2}{p_1} + \frac{1}{p_2}.
\]
A natural and convenient choice of exponents is given by $p_1 = 6$ and $p_2 = 3$. 
This selection is consistent with the Sobolev embedding 
$\dot H^1(\mathbb{R}^3) \hookrightarrow L^6(\mathbb{R}^3)$, 
which provides control of the velocity field in $L^6$, while the radial 
component is handled in $L^3$. 
This argument also plays a role in the proof of our theorem, specifically in the control of the decay of terms involving the head pressure 
$Q = \pi + \frac{|u|^2}{2}$ (see \eqref{forheuristic1}--\eqref{forheuristic2} for further details).
\end{remark}

In our second main result we establish a  Liouville-type result that improves upon $L^{3}$ condition for the radial component of the velocity field. More precisely, we show that the triviality of solutions already follows from a weaker integrability assumption of the form $u \in L^{3+\eta}(\mathbb{R}^3)$, with $\eta>0$.  This theorem reads as follows.

  \begin{theorem}\label{thm1.2} 
 Let $\vu  \in \dot{H}^1(\mathbb{R}^3) $ be a solution of (\ref{SNS})
 and let $u_\rho(x)$ be the radial component of such solution in spherical coordinates. 
Consider $R_0>3$ fixed  and let $\eta(\cdot)$ be a scalar function defined by  $\eta(x)=  3 $ for $|x|< R_0$ and $\eta(x)=   \frac{3R_0}{|x|},$ for $|x| \geq  R_0$.  
 Suppose in addition that
 $u_\rho \in L^{3 + \eta(\cdot) }(\mathbb{R}^3)$.
 Then, $u $ is identically equal to zero.
\end{theorem}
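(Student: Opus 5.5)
The plan is a localized energy identity in which every cubic term is written using only $u_\rho$, followed by Hölder's inequality with a variable exponent. Fix $\varphi\in C_c^\infty$ radial, with $\varphi=1$ on $B_1$ and $\varphi=0$ outside $B_2$, and set $\varphi_R(x)=\varphi(x/R)$. By standard regularity, $u\in\dot H^1$ is smooth and $\pi\in L^3$. Testing \eqref{SNS} against $u\varphi_R$ and using $\nabla\cdot u=0$ gives
\[
\int |\nabla u|^2\varphi_R=\frac12\int|u|^2\Delta\varphi_R+\int Q\,(u\cdot\nabla\varphi_R),\qquad Q=\pi+\tfrac{|u|^2}{2}.
\]
The radial choice of $\varphi_R$ is what matters here. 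Since $\nabla\varphi_R(x)=R^{-1}\varphi'(|x|/R)\,x/|x|$, we have $u\cdot\nabla\varphi_R=R^{-1}\varphi'(|x|/R)\,u_\rho$. So the only factor in the cubic term that is not controlled by $\dot H^1$ is $u_\rho$, and it is supported in the annulus $C_R=\{R<|x|<2R\}$.

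The first term is harmless: $R^{-2}\int_{C_R}|u|^2\le C\|u\|_{L^6(C_R)}^2\to0$. For the second, write $|Q|\lesssim|\pi|+|u|^2$. Both $\pi$ and $|u|^2$ lie in $L^3$ (the latter by $u\in L^6$), so $\|Q\|_{L^3(C_R)}\to0$. By Hölder,
\[
\Big|\int Q\,u\cdot\nabla\varphi_R\Big|\le \frac CR\,\|Q\|_{L^{3}(C_R)}\,\|u_\rho\|_{L^{q}(C_R)}\,|C_R|^{\frac23-\frac1q}.
\]
Since $|C_R|^{\frac23-\frac1q}\sim R^{2-3/q}$, the right side is $\lesssim o(1)\,R^{1-3/q}\|u_\rho\|_{L^q(C_R)}$. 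This is bounded whenever $q\le3$, which is how Theorem~\ref{thm1} would go. Here, however, $q=3+\eta(x)>3$ on $C_R$. Indeed $\eta\approx 3R_0/R$ there, so $q_R:=3+3R_0/R$ up to a factor of $2$ in $R_0/R$.

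The key step, and the main obstacle, is to show that the loss $R^{1-3/q}$ is bounded precisely because $\eta$ decays like $R_0/|x|$. We have $1-3/q=\eta/(3+\eta)\approx R_0/R$, hence
\[
R^{1-3/q}\approx e^{(R_0/R)\log R}\to1 .
\]
This is the reason for the choice $\eta\sim 1/|x|$; any $\eta\gtrsim(\log|x|)^{-1}$ would fail. The exponent is not constant on $C_R$, since $\eta$ ranges between $3R_0/(2R)$ and $3R_0/R$. I would therefore work with the variable-exponent (Luxemburg) norm and the generalized Hölder inequality in $L^{p(\cdot)}$. Alternatively, a concrete route is to bound pointwise $|u_\rho|^{3}\le 1+|u_\rho|^{3+\eta(x)}$, restricted to the set where $|u_\rho|\ge\lambda_R$, with $\lambda_R=R^{-1}$. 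The complementary piece, where $|u_\rho|<\lambda_R$, is handled crudely: it contributes $R^{-1}\lambda_R\|Q\|_{L^1(C_R)}\lesssim R^{-2}R^{2}\|Q\|_{L^3}\to0$.

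On the set $\{|u_\rho|\ge\lambda_R\}$, we have $|u_\rho|^3\le\lambda_R^{-\eta}|u_\rho|^{3+\eta}\le R^{3R_0/R}|u_\rho|^{3+\eta}\le C|u_\rho|^{3+\eta}$. Thus $\int_{C_R}|u_\rho|^3\lesssim \int_{C_R}|u_\rho|^{3+\eta(x)}+R^{-3}|C_R|$, which is bounded by the modular assumption. Combining this with the $q=3$ Hölder estimate above yields
\[
\Big|\int Q\,u\cdot\nabla\varphi_R\Big|\lesssim\|Q\|_{L^3(C_R)}\big(\|u_\rho\|_{L^3(C_R)}+1\big)\to0 .
\]
Letting $R\to\infty$ gives $\nabla u=0$, so $u$ is constant. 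Since $u\in L^6$, $u\equiv0$.
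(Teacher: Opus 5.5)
Your proof is correct, and its key step takes a different route from the paper's. The paper deduces Theorem \ref{thm1.2} from the more general Theorem \ref{prop.1}, which is phrased in variable-exponent language:
\begin{itemize}
\item it applies the variable-exponent H\"older inequality directly with $u_\rho\in L^{p(\cdot)}$;
\item it bounds $\|\partial_\rho\psi_R\|_{L^{q(\cdot)}(\mathcal{C}(R))}$ by $CR^{1-3/p^+_{\mathcal{C}(R)}}$, using the Luxemburg norm of the constant function $1$;
\item it invokes Lemma \ref{prop:main_limit_thmover_new}: since $p^+_{\mathcal{C}(R)}=3+O(1/R)$, the loss $R^{1-3/p^+_{\mathcal{C}(R)}}$ tends to $1$;
\item it gets the vanishing from the tails $\|u\|_{L^6(\mathcal{C}(R))}\to0$ and $\|u_\rho\|_{L^{p(\cdot)}(\mathcal{C}(R))}\to0$.
\end{itemize}
You avoid Luxemburg norms entirely. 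You split your annulus $C_R$ at the level $|u_\rho|=R^{-1}$ and use $R^{\eta(x)}\le R^{3R_0/R}\le C$. This turns the finiteness of the modular $\int|u_\rho|^{3+\eta(x)}\,dx$ into a uniform bound on $\|u_\rho\|_{L^3(C_R)}$. From there the constant-exponent estimate of Theorem \ref{thm1} applies, and the vanishing comes from $\|Q\|_{L^3(C_R)}\to0$ rather than from the tail of $u_\rho$.

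Both arguments rest on the same fact: $\eta\log R$ stays bounded on the annulus. Your version is more elementary and self-contained. It needs neither continuity nor monotonicity of the exponent. It also makes clear that only the behaviour of $\eta$ at infinity matters; the $L^6$ requirement on $B(0,R_0)$ is automatic, since $u\in L^6$. The paper's route instead packages the result as a statement about a whole class of variable exponents.

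One small correction to a side remark. Your method still works when $\eta(x)\le c/\log|x|$ at infinity, since then $R^{\eta}\le e^{c}$. It breaks down only when $\eta(x)\log|x|\to\infty$. So ``any $\eta\gtrsim(\log|x|)^{-1}$ would fail'' should read $\eta\gg(\log|x|)^{-1}$, and it describes a limitation of the method, not of the theorem.
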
 

\noindent We now comment on this result and its relation to the existing literature.

\begin{remark} 
Theorem \ref{thm1.2} offers an alternative perspective toward uniqueness: 
instead of imposing a global $L^6$ condition\footnote{As mentioned above, it remains an open question whether the global condition 
$u \in L^6(\mathbb{R}^3)$ implies the triviality of solutions.} on $\mathbb{R}^3$, 
we require such integrability only on a fixed compact domain 
(note that $\eta(x)=3$ on $B(0,R_0)$), while allowing the exponent to 
decrease radially toward $3$ as $|x| \to \infty$. 
This formulation captures a transition from the desired  integrability regime 
in a bounded region to the $L^3$ behavior attained asymptotically at infinity.
\end{remark}

\begin{remark}
Theorem \ref{thm1.2} also shows that the integrability condition can be effectively 
localized at infinity. More precisely, the critical exponent $3$ is only required 
asymptotically, and may be approached from above at a controlled rate. 
We further emphasize that the radius $R_0$ is arbitrary, so the region where stronger 
integrability is imposed can be taken arbitrarily far from the origin.
\end{remark}

To prove Theorem \ref{thm1.2}, we rely on a more general uniqueness result in the context of Lebesgue spaces with variable exponents. This functional setup allows us to capture different integrability behaviors within a single space. Such spaces have already been used to prove Liouville-type theorems for the stationary Navier–Stokes equations and to overcome the well-known $9/2$ exponent threshold, we refer the interested reader to \cite{CV2023} for more details.\\

The rest of the paper is structured as follows. Section \ref{sect:2} gives a brief overview of variable exponent Lebesgue spaces: their definitions, main properties, and a few useful lemmas. Section \ref{sec:3} then presents the proof of our main results.

\section{Preliminaries}\label{sect:2}

We begin this section by introducing some notations used throughout this paper; given $R>1$, we consider 
\[
C( R) := \left\{ x \in \mathbb{R}^3 \ \big | \ \frac R2 < |x| < R \right\}.
\]

For clarity of the presentation, in this section we recall 
several definitions and properties of variable Lebesgue spaces. Given $\mathcal{X}\subseteq \mathbb{R}^{n}$, let $\mathcal{P}(\mathcal{X})$ be the set of measurable functions $p(\cdot):\mathcal{X}\rightarrow[1,+\infty]$.  The elements of $\mathcal{P}(\mathcal{X})$ are called variable exponents. For $p(\cdot)\in \mathcal{P}(\mathcal{X})$, we consider the following notations
$$
 p^{-}:=\operatorname{essinf}_{x\in\mathcal{X}}p(x), \ \  p^{+}:=\operatorname{esssup}_{x\in\mathcal{X}}p(x).
$$
In what follows we will always  consider  $1<p^-\leq p^+<+\infty$.\\

Given  $\mathcal{X}\subseteq\mathbb{R}^{n}$,  $p(\cdot)\in \mathcal{P}(\mathcal{X})$,  and a measurable function $f(x)$,  we set the quantity
\begin{equation}\label{equation2.2}
   \|f\|_{L^{p(\cdot)}}(\mathcal{X}):=\inf\left\{\lambda>0: 
   \int_\mathcal{X} \left|\frac{f(x)}{\lambda}\right|^{p(x)}dx
\leq 1\right\}.
\end{equation}
If the set involved in  \eqref{equation2.2} is empty,  then we define $\|f\|_{L^{p(\cdot)}} = \infty$.
Note that,  if the exponent function $p(\cdot)$ is a constant, i.e. if $p(\cdot)=p\in [1,\infty)$, then we obtain the usual norm of Lebesgue spaces.
\smallbreak

\begin{definition}\label{de2.2}
Given  $\mathcal{X}\subseteq\mathbb{R}^{n}$ and $p(\cdot)\in \mathcal{P}(\mathcal{X})$,  the variable exponent Lebesgue space $L^{p(\cdot)}(\mathcal{X})$ is defined as the set of  measurable functions $f(x)$ satisfying  $\|f\|_{L^{p(\cdot)}(\mathcal{X})}<\infty$.
\end{definition}
At this point we must stress the fact that the speaces $L^{p(\cdot)}(\mathcal{X})$ are in fact Banach spaces associated with the norm $\|\cdot\|_{L^{p(\cdot)}}$.
In the following, we state some of their main properties.
\begin{lemma}[H\"{o}lder inequality]
Consider a domain $\mathcal{X}\subseteq \mathbb{R}^n$ and $q(\cdot),\,r(\cdot),\,p(\cdot)\in \mathcal{P}(\mathbb{R}^n)$ such that 
$\frac{1}{p(x)}=\frac{1}{q(x)}+\frac{1}{r(x)}$, for $x\in \mathcal{X}$. 
Then, given $u\in L^{q(\cdot)}(\mathcal{X}) $ and $v \in L^{r(\cdot)}(\mathcal{X})$, the pointwise product $uv$ belongs to  $L^{p(\cdot)}(\mathcal{X})$, and there exists  a positive constant $C$ such that 
\begin{equation}\label{eq2.3}
\|uv\|_{L^{p(\cdot)}(\mathcal{X})} \leq C\|u\|_{L^{q(\cdot)}(\mathcal{X})}\|v\|_{L^{r(\cdot)}(\mathcal{X})}.
\end{equation}
\end{lemma}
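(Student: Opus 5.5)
The variable-exponent Hölder inequality \eqref{eq2.3} can be reduced to the classical Young inequality applied pointwise, with the homogeneity of the norm \eqref{equation2.2} handling the scaling. The target constant is $C = \frac{p^+}{q^-}+\frac{p^+}{r^-}$, or more simply $C=2$ after a convexity argument.

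First, two easy cases. If $\|u\|_{L^{q(\cdot)}(\mathcal{X})}=0$ or $\|v\|_{L^{r(\cdot)}(\mathcal{X})}=0$, then $u=0$ or $v=0$ a.e., so $uv=0$ and there is nothing to prove. Otherwise, by homogeneity of the Luxemburg norm, $\|\lambda f\|=|\lambda|\,\|f\|$, we may normalize and assume $\|u\|_{L^{q(\cdot)}}=\|v\|_{L^{r(\cdot)}}=1$.

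Next, we check that the normalization gives control of the modulars. The map $\lambda\mapsto\int_{\mathcal{X}}|f/\lambda|^{p(x)}dx$ is nonincreasing, and by monotone convergence it is left/right continuous where finite. Hence norm $1$ gives $\int_{\mathcal{X}}|u|^{q(x)}dx\le 1$, and likewise $\int_{\mathcal{X}}|v|^{r(x)}dx\le1$. To justify this, take $\lambda_k\downarrow 1$ with modular at most $1$, and pass to the limit using Fatou's lemma.

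The key pointwise step uses the relation $\frac{1}{p(x)}=\frac1{q(x)}+\frac1{r(x)}$. Set $a=q(x)/p(x)$ and $b=r(x)/p(x)$, so that $\frac1a+\frac1b=1$. Young's inequality, applied to $|u|^{p(x)}$ and $|v|^{p(x)}$, gives
\[
|u(x)v(x)|^{p(x)}\le \frac{p(x)}{q(x)}|u(x)|^{q(x)}+\frac{p(x)}{r(x)}|v(x)|^{r(x)}\le |u(x)|^{q(x)}+|v(x)|^{r(x)}.
\]
The case where one of $q,r$ is infinite on a set would need separate treatment, but we assume finite exponents here. Integrating yields $\int_{\mathcal{X}}|uv|^{p(x)}dx\le 2$.

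Finally, we pass from the modular back to the norm. Since $p(x)\ge p^-\ge1$, we have $|uv/2|^{p(x)}\le \frac12|uv|^{p(x)}$ pointwise. Therefore $\int_{\mathcal{X}}|uv/2|^{p(x)}dx\le1$, hence $\|uv\|_{L^{p(\cdot)}}\le 2$, which after undoing the normalization is \eqref{eq2.3} with $C=2$.

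The only mildly delicate point is the modular–norm comparison in the second step, namely the lower semicontinuity argument. The rest is routine.
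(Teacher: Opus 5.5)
Your argument is correct, and it is essentially the standard proof of the result the paper cites without proof (\cite[Lemma 3.2.20]{Diening_Libro}): normalize, apply Young's inequality pointwise with exponents $q/p$ and $r/p$ to bound the modular by $2$, then use $p\ge 1$ (convexity) to obtain $C=2$. Restricting to finite exponents is harmless, since the paper's norm definition and its standing assumption $p^+<\infty$ only cover that case; the constant $\frac{p^+}{q^-}+\frac{p^+}{r^-}$ in your opening line is never used or justified and can be dropped.
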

A proof of this lemma can be consulted in  \cite[Lemma 3.2.20]{Diening_Libro}.
\begin{definition}\label{globallogholder}
Consider  a domain $\mathcal{X}\subseteq \mathbb{R}^d$ 
and  $p(\cdot)\in \mathcal{P}(\mathcal{X})$.
The variable exponent $ p(\cdot): \mathcal{X} \to \mathbb{R} $ is  called locally log-Hölder continuous on $ \mathcal{X} $ if there exists $ C_1 > 0 $ such that
\begin{equation}\label{relation_loc_logholder}
|p(x) - p(y)| \leq \frac{C_1}{\log(e + 1/|x - y|)},
\end{equation}
for all $ x, y \in \mathcal{X} $. We say that $p(\cdot) $ satisfies the  log-Hölder decay condition if there exist a constants $ p_\infty$ and  $ C_2 > 0 $ such that, for all $ x \in \mathcal{X} $  
\begin{equation}\label{relation_decay_logholder}
|p(x) -  p_\infty| \leq \frac{C_2}{\log(e + |x|)}
.
\end{equation}
The variable exponent $ p(\cdot) $ is  called globally log-Hölder continuous in $ \mathcal{X} $ if it is locally log-Hölder continuous and satisfies the log-Hölder decay condition, and this  class  is denoted by 
$\mathcal{P}^{log}(\mathcal{X})$. 
\end{definition}
Given a  $ \mathcal{X} \subset \Rt$  measurable and   $p(\cdot) \in \mathcal{P}(\Rt)$,  the notation 
$p_\mathcal{X}(\cdot)$ stands for the variable exponent restricted to the set $\mathcal{X}$, \emph{i.e.} ${p}_\mathcal{X}(\cdot)=  p  (\cdot)_{|_{\mathcal{X}}}$. 
\begin{lemma}\label{PropositionLpplusminus}
Consider a measurable set $\mathcal{X}\subset \R^3$ and $p(\cdot)\in \mathcal{P}(\Rt)$ a variable exponent, assume that we have $|\mathcal{X}|$ has finite measure.  Then 
$$\|1\|_{L^{p_{\mathcal{X}}(\cdot) }(\mathcal{X})}\leq 2\max\{|\mathcal{X}|^{\frac 1{ p^-}},|\mathcal{X}|^{\frac 1{p^+}}\}.$$
\end{lemma}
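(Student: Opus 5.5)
The plan is to argue directly from the definition of the Luxemburg norm \eqref{equation2.2}: exhibit one explicit admissible value of $\lambda$ and bound it by the right-hand side. Write $m:=|\mathcal{X}|<\infty$. If $m=0$, then $\int_{\mathcal{X}}\lambda^{-p(x)}\,dx=0\le 1$ for every $\lambda>0$, so $\|1\|_{L^{p_{\mathcal{X}}(\cdot)}(\mathcal{X})}=0$ and there is nothing to prove. So assume $m>0$. The key observation is that for a constant function the modular is $\int_{\mathcal{X}}\lambda^{-p(x)}\,dx$, and we only need to control $\lambda^{-p(x)}$ uniformly in $x$ using $p^-\le p(x)\le p^+$. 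Here $p^\pm$ may be taken over $\mathcal{X}$; this only sharpens the bound, since restricting to $\mathcal{X}$ raises $p^-$ and lowers $p^+$.

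\emph{Case $m\ge 1$.} Take $\lambda=m^{1/p^-}\ge 1$. Since $\lambda\ge1$ and $p(x)\ge p^-$, we have $\lambda^{-p(x)}\le \lambda^{-p^-}=m^{-1}$. Hence $\int_{\mathcal{X}}\lambda^{-p(x)}\,dx\le 1$, so $\lambda$ is admissible and
\[
\|1\|_{L^{p_{\mathcal{X}}(\cdot)}(\mathcal{X})}\le m^{1/p^-}.
\]

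\emph{Case $m<1$.} Take $\lambda=m^{1/p^+}<1$. Since $\lambda<1$ and $p(x)\le p^+$, we have $\lambda^{-p(x)}\le \lambda^{-p^+}=m^{-1}$. Again the modular is at most $1$, so
\[
\|1\|_{L^{p_{\mathcal{X}}(\cdot)}(\mathcal{X})}\le m^{1/p^+}.
\]

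In either case the norm is at most $\max\{m^{1/p^-},m^{1/p^+}\}$, which is at most twice that quantity. The factor $2$ is therefore harmless slack and presumably matches the constant in the version of the lemma quoted from \cite{Diening_Libro}.

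There is no genuine obstacle here. The only point needing care is choosing which of $p^-$ and $p^+$ to use, according to whether $\lambda\ge1$ or $\lambda<1$, so that the monotonicity of $t\mapsto\lambda^{-t}$ runs in the right direction.
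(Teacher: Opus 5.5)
Your argument is correct. The paper gives no proof of this lemma; it only cites \cite[Lemma 3.2.12]{Diening_Libro}. Your route is a self-contained check from the Luxemburg norm \eqref{equation2.2}, and it works as follows. You choose the admissible level $\lambda=|\mathcal{X}|^{1/p^-}$ when $|\mathcal{X}|\ge 1$, and $\lambda=|\mathcal{X}|^{1/p^+}$ when $|\mathcal{X}|<1$. With this choice, $t\mapsto\lambda^{-t}$ is monotone in the right direction, so $\lambda^{-p(x)}\le |\mathcal{X}|^{-1}$ a.e.\ on $\mathcal{X}$ and the modular is at most $1$.

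This uses nothing beyond the paper's standing assumption of bounded exponents. It also shows the factor $2$ is unnecessary in this setting. The citation gains the paper nothing, since constants are absorbed anyway.

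Your remark that $p^\pm$ may be taken over $\mathcal{X}$ is worth keeping, and it is justified. If $|\mathcal{X}|\ge 1$, then $|\mathcal{X}|^{1/p^-_{\mathcal{X}}}\le|\mathcal{X}|^{1/p^-}$. If $|\mathcal{X}|<1$, then $|\mathcal{X}|^{1/p^+_{\mathcal{X}}}\le|\mathcal{X}|^{1/p^+}$. So the localized bound is the stronger one. It is exactly this localized form, with exponents $q^\pm_{\mathcal{C}(R)}$ on the annuli $\mathcal{C}(R)$, that the paper invokes later.
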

A proof of this lemma can be found in \cite[Lemma 3.2.12]{Diening_Libro}.  
\begin{lemma}\label{lemmaLinftyLpvariable}
Let $\mathcal{X}\subseteq \R^3$ and $p(\cdot)\in \mathcal{P}(\mathbb{R}^3)$ a variable exponent. Then, we have the space inclusion $L^{\infty} (\mathcal{X}) \subset L^{p_{\mathcal{X}}(\cdot)} (\mathcal{X})$, if and only if $1\in  L^{p_{\mathcal{X}}(\cdot)} (\mathcal{X}) $ and  the following estimate follows
$$\|f\|_{L^{p_{\mathcal{X}}(\cdot)} (\mathcal{X})}\leq \|f\|_{L^\infty(\mathcal{X})}\|1\|_{L^{p_{\mathcal{X}}(\cdot)} (\mathcal{X})}.$$
In particular, the embedding holds if $|\mathcal{X}|$ has finite measure. 
\end{lemma}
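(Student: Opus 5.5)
The plan is to prove the lemma directly from the definition \eqref{equation2.2} of the Luxemburg-type norm. The argument rests on two elementary structural properties of $\|\cdot\|_{L^{p_{\mathcal{X}}(\cdot)}(\mathcal{X})}$, each of which I would verify first.

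\emph{Monotonicity.} If $|g|\le |h|$ a.e. on $\mathcal{X}$, then $\|g\|\le\|h\|$. Indeed, for every $\lambda>0$ we have
\[
\int_{\mathcal{X}}|g/\lambda|^{p(x)}\,dx\le \int_{\mathcal{X}}|h/\lambda|^{p(x)}\,dx,
\]
because $t\mapsto t^{p(x)}$ is nondecreasing on $[0,\infty)$. Hence every admissible $\lambda$ for $h$ is admissible for $g$, and the infimum for $g$ is no larger.

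\emph{Homogeneity.} For $c>0$ we have $\|c\,h\|=c\|h\|$. This follows by the change of variables $\lambda=c\mu$ in the infimum defining the norm.

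For the ``only if'' direction, the constant function $1$ lies in $L^\infty(\mathcal{X})$. So if $L^\infty(\mathcal{X})\subset L^{p_{\mathcal{X}}(\cdot)}(\mathcal{X})$, then $1\in L^{p_{\mathcal{X}}(\cdot)}(\mathcal{X})$.

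For the ``if'' direction together with the estimate, take $f\in L^\infty(\mathcal{X})$. If $\|f\|_{L^\infty}=0$, then $f=0$ a.e. and there is nothing to prove. Otherwise $|f(x)|\le \|f\|_{L^\infty(\mathcal{X})}\cdot 1$ for a.e. $x\in\mathcal{X}$. Monotonicity followed by homogeneity then gives
\[
\|f\|_{L^{p_{\mathcal{X}}(\cdot)}(\mathcal{X})}\le \big\|\,\|f\|_{L^\infty}\,1\big\|_{L^{p_{\mathcal{X}}(\cdot)}(\mathcal{X})}=\|f\|_{L^\infty(\mathcal{X})}\,\|1\|_{L^{p_{\mathcal{X}}(\cdot)}(\mathcal{X})}.
\]
The right-hand side is finite by hypothesis, so $f\in L^{p_{\mathcal{X}}(\cdot)}(\mathcal{X})$.

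For the final assertion, suppose $|\mathcal{X}|<\infty$. Since the paper works throughout with $1<p^-\le p^+<\infty$, Lemma~\ref{PropositionLpplusminus} gives
\[
\|1\|_{L^{p_{\mathcal{X}}(\cdot)}(\mathcal{X})}\le 2\max\{|\mathcal{X}|^{1/p^-},|\mathcal{X}|^{1/p^+}\}<\infty .
\]
Thus $1\in L^{p_{\mathcal{X}}(\cdot)}(\mathcal{X})$, and the embedding follows from the previous step.

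There is no real obstacle here. The only point requiring care is the standing assumption $p^+<\infty$, which is used both in the monotonicity step (so that $t^{p(x)}$ makes sense without the $p=\infty$ convention) and in invoking Lemma~\ref{PropositionLpplusminus}.
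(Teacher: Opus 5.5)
Your proof is correct and follows the standard argument. The paper gives no proof of its own and defers to \cite[Proposition 2.43]{CRUZ}, where the estimate comes, as in your proposal, from the pointwise bound $|f|\le \|f\|_{L^\infty(\mathcal{X})}\cdot 1$ together with monotonicity and homogeneity of the Luxemburg norm; the converse comes from $1\in L^\infty(\mathcal{X})$, and the finite-measure case from the bound on $\|1\|_{L^{p_{\mathcal{X}}(\cdot)}(\mathcal{X})}$ in Lemma~\ref{PropositionLpplusminus}.
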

The proof of this lemma can be consulted in the book \cite[Proposition 2.43]{CRUZ}. 
The next proposition presents relations between the norm  and modular functions associated to functions in $L^{p(\cdot)}$. 
\begin{proposition} \label{proposition.utendzero}
Consider a domain $\mathcal{X} \subseteq \R^n$ and  a variable exponent $p(\cdot) \in \mathcal{P}(\mathcal{X})$. If $\|f\|_{L^{p(\cdot)}(\mathcal{X})} > 1$, then we have 
\[
 \|f\|_{L^{p(\cdot)}(\mathcal{X})} \leq \left(\int_{\mathcal{X}} |f(x)|^{p(x)}dx \right)^{1/p_-}.
\]
On the other hand, if we have $\|f\|_{L^{p(\cdot)}(\mathcal{X})} \leq 1$, then
\[
 \|f\|_{L^{p(\cdot)}(\mathcal{X})} \leq \left(\int_{\mathcal{X}} |f(x)|^{p(x)}dx \right)^{1/p_+}.
\] 
\end{proposition}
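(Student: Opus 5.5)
The plan is to work directly from the definition \eqref{equation2.2} of the Luxemburg norm and compare the modular
\[
\rho(f):=\int_{\mathcal{X}}|f(x)|^{p(x)}\,dx
\]
with its rescalings. The key observation is that for every $\mu>0$ we have $\rho(f/\mu)=\int_{\mathcal{X}}\mu^{-p(x)}|f(x)|^{p(x)}\,dx$. Since $p^-\leq p(x)\leq p^+<\infty$, this gives two pointwise comparisons:
\[
\rho(f/\mu)\leq \mu^{-p^-}\rho(f)\quad\text{if }\mu\geq 1,\qquad \rho(f/\mu)\leq \mu^{-p^+}\rho(f)\quad\text{if }0<\mu\leq 1 .
\]
By definition of the infimum in \eqref{equation2.2}, any $\mu>0$ with $\rho(f/\mu)\leq 1$ satisfies $\|f\|_{L^{p(\cdot)}(\mathcal{X})}\leq\mu$. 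So in each case it suffices to exhibit the claimed right-hand side as such a $\mu$.

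First I would dispose of the degenerate cases. If $\rho(f)=\infty$, both inequalities are trivial. If $\rho(f)=0$, then $f=0$ a.e., so the norm is $0$; this case only occurs under the second hypothesis, and the bound $0\leq 0$ holds.

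\emph{Case $\|f\|_{L^{p(\cdot)}(\mathcal{X})}>1$.} Set $\mu=\rho(f)^{1/p^-}$. If $\mu<1$, then $\rho(f)<1$, so $\lambda=1$ is admissible in \eqref{equation2.2}. This gives $\|f\|_{L^{p(\cdot)}(\mathcal{X})}\leq 1$, a contradiction. Hence $\mu\geq1$, and the first comparison yields $\rho(f/\mu)\leq \mu^{-p^-}\rho(f)=1$. Therefore $\|f\|_{L^{p(\cdot)}(\mathcal{X})}\leq \rho(f)^{1/p^-}$.

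\emph{Case $\|f\|_{L^{p(\cdot)}(\mathcal{X})}\leq1$.} Set $\mu=\rho(f)^{1/p^+}$. If $\mu>1$, the desired bound is immediate, since $\|f\|_{L^{p(\cdot)}(\mathcal{X})}\leq 1<\mu$. If $0<\mu\leq 1$, the second comparison gives $\rho(f/\mu)\leq\mu^{-p^+}\rho(f)=1$, hence $\|f\|_{L^{p(\cdot)}(\mathcal{X})}\leq\rho(f)^{1/p^+}$.

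The argument is elementary and I do not expect a genuine obstacle. The only points needing care are the case distinction on whether $\mu$ lies above or below $1$, which decides whether $p^-$ or $p^+$ controls $\mu^{-p(x)}$, and the use of $p^+<\infty$ to ensure these exponents are finite.
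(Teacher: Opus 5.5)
Your proof is correct; it takes a different route from the standard one. The paper gives no argument of its own and only cites \cite{CRUZ}. Write $\|f\|$ for $\|f\|_{L^{p(\cdot)}(\mathcal{X})}$. The standard proof in that reference first establishes the normalization identity $\rho(f/\|f\|)=1$ for $0<\|f\|<\infty$. This comes from letting $\lambda\to\|f\|$ in the definition of the norm, and it is where $p^+<\infty$ is needed. It then compares $\|f\|^{-p(x)}$ with $\|f\|^{-p^{\pm}}$ inside the integral.

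You bypass the normalization identity altogether. You test the candidate $\mu=\rho(f)^{1/p^-}$ (resp.\ $\rho(f)^{1/p^+}$) directly in the infimum, using only the monotonicity of $t\mapsto\mu^{-t}$ on either side of $\mu=1$. In the first case you add a one-line contradiction to place $\mu$ on the correct side of $1$.

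This buys two things:
\begin{itemize}
\item The argument is more elementary and self-contained: there is no limiting argument, and the degenerate cases $\rho(f)\in\{0,\infty\}$ and $\|f\|=\infty$ are handled explicitly.
\item It shows that $p^+<\infty$ is needed only in the second case.
\end{itemize}
What the normalization route gives in addition are the reverse bounds: $\rho(f)^{1/p^+}\le\|f\|$ when $\|f\|>1$, and $\rho(f)^{1/p^-}\le\|f\|$ when $\|f\|\le 1$. These are not part of the statement. The paper later uses only the upper bound for $\|f\|\le 1$, in Lemma \ref{Lemma.utendzero}, so nothing is lost.
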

\noindent The proof of this proposition can be found in \cite[Chapter 2, page 25]{CRUZ}.  
The following result will be important for proving our main results. Although the proof is standard, we will present an additional proof for clarity of the presentation.

\begin{lemma} \label{Lemma.utendzero}
Let $p(\cdot)\in \mathcal{P}(\R^3 )$  and $f\in L^{p(\cdot)}(\R^3 )$. 
Then, 
\begin{equation}
\lim_{R\to +\infty} \| f\|_{L^{p(\cdot)}(\mathcal{C}( R)   )} =0
.
\end{equation}
\end{lemma}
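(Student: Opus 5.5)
The plan is to pass from the norm to the modular, show that the modular over the annulus tends to zero by dominated convergence, and then convert back using Proposition \ref{proposition.utendzero}.

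Throughout, $p(\cdot)$ is assumed to satisfy the standing assumption $1<p^-\le p^+<+\infty$, and $\mathcal{C}(R)$ denotes the annulus $C(R)$.

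\emph{Step 1 (finite modular).} Since $f\in L^{p(\cdot)}(\R^3)$, we have $\lambda_0:=\|f\|_{L^{p(\cdot)}(\R^3)}<\infty$. If $\lambda_0=0$, then $f=0$ a.e. and the claim is trivial. Otherwise, take $\lambda=\max\{1,2\lambda_0\}$. By the definition of the norm as an infimum, $\int_{\R^3}|f/\lambda|^{p(x)}\,dx\le 1$. Because $p^+<\infty$, we get
\[
\int_{\R^3}|f(x)|^{p(x)}dx\le \lambda^{p^+}\int_{\R^3}|f/\lambda|^{p(x)}dx\le \lambda^{p^+}<\infty .
\]
Hence $g(x):=|f(x)|^{p(x)}\in L^1(\R^3)$.

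\emph{Step 2 (modular on annuli tends to zero).} Define
\[
\rho_R:=\int_{\mathcal{C}(R)}|f(x)|^{p(x)}dx=\int_{\R^3}g\,\mathbf 1_{\{R/2<|x|<R\}}\,dx .
\]
We have $\mathcal{C}(R)\subset\{|x|>R/2\}$, and $g\,\mathbf 1_{\{|x|>R/2\}}\to0$ pointwise as $R\to\infty$, with $g$ as an integrable majorant. Dominated convergence therefore gives $\rho_R\le\int_{|x|>R/2}g\,dx\to0$.

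\emph{Step 3 (back to the norm).} Apply Proposition \ref{proposition.utendzero} with $\mathcal{X}=\mathcal{C}(R)$, where $p^\pm$ may be taken as the global ones, since the restricted $p^-$ and $p^+$ are bracketed by them. We first rule out the case $\|f\|_{L^{p(\cdot)}(\mathcal{C}(R))}>1$ for large $R$. In that case the proposition would give $1<\|f\|\le\rho_R^{1/p^-}$, which is impossible once $\rho_R<1$. So for $R$ large the norm is at most $1$, and then
\[
\|f\|_{L^{p(\cdot)}(\mathcal{C}(R))}\le\rho_R^{1/p^+}\to0 .
\]

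The only point needing care is Step 1, where finiteness of the modular follows from finiteness of the norm. This relies on $p^+<\infty$, which is part of the standing assumptions. The rest is routine.
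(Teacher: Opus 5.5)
Your proof is correct and follows the same route as the paper. You bound the modular on $\mathcal{C}(R)$ by the modular on $\{|x|>R/2\}$, send it to zero by dominated convergence, and return to the norm via the second case of Proposition~\ref{proposition.utendzero}. Your two extra checks make explicit what the paper's proof takes for granted: finiteness of the modular from $\|f\|_{L^{p(\cdot)}}<\infty$ using $p^+<\infty$, and excluding $\|f\|_{L^{p(\cdot)}(\mathcal{C}(R))}>1$ once $\rho_R<1$.
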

\begin{proof}
To begin note that, given $R>1$ and $\mathcal{C}( R)   \subset \mathcal{X}_R := \{ x\in \R^3 : \frac R 2 < |x| \}$, we can write 
\begin{equation}
\int_{\mathcal{C}( R)  } |f(x)|^{p(x)}dx
\leq 
\int_{\mathcal{X}_R} |f(x)|^{p(x)}dx<\infty.
\end{equation}
The, considering that  $\mathcal{X}_R \downarrow \emptyset \text{ as } R \to \infty $ and $|f(x)|^{p(x)}$ belongs to $L^1(\mathbb{R}^3)$, we conclude (using dominated convergence theorem)
$ \int_{\mathcal{X}_R} |f(x)|^{p(x)}dx \to  0\   
 \text{ as } R \to \infty .
$
Hence, we can write 
$$
\lim_{R \to \infty}
\int_{\mathcal{C}( R)  } |f(x)|^{p(x)}dx 
=0.$$
 

\noindent 
Now, note that, there exists $R_0>0$ such that for each $R \geq R_0$,
$
\int_{\mathcal{C}( R)  } |f(x)|^{p(x)}dx <1.
$
Thus, considering Proposition \ref{proposition.utendzero}, we get 
\[
\|u\|_{L^{p(\cdot)}(\mathcal{C}( R)  )}
\le \left(\int_{\mathcal{C}( R)  } |f(x)|^{p(x)}dx \right)^{1/p_+}
\quad \forall  R \ge R_0.
\]
Then,  by passing to the limit as $R\to\infty$, we conclude the desired limit.
 \end{proof} 

To continue, we present  a key lemmas that will be used in the proof of Theorem \ref{thm1.2}.

\begin{lemma}\label{prop:main_limit_thmover_new}
Consider $p(\cdot)$ be the variable exponent defined in Theorem \ref{thm1.2}, $R_0>3$ and $R\geq 2R_0^2$. Let  
$h:[1,\infty) \to [0,\infty)$ be such that 
$
h(R) \to 0  \text{ as } R \to \infty.
$ 
Then
\[
\lim_{R\to\infty} R^{1 - \frac{3}{p_{\mathcal{C}_R}^+}} h(R) = 0.
\]
\end{lemma}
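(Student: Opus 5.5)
The plan is to compute $p^+_{\mathcal{C}_R}$ explicitly and show that the power $R^{1-3/p^+_{\mathcal{C}_R}}$ stays bounded for large $R$. Then the product with $h(R)\to 0$ tends to zero. Here $p(\cdot)=3+\eta(\cdot)$ is the exponent of Theorem \ref{thm1.2}, and $\mathcal{C}_R=\mathcal{C}(R)=\{R/2<|x|<R\}$.

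\emph{Step 1: identify $p^+_{\mathcal{C}_R}$.} Since $R\geq 2R_0^2>2R_0$, we have $R/2>R_0$, so every $x\in\mathcal{C}_R$ satisfies $|x|>R_0$. There $\eta(x)=3R_0/|x|$, which is radially decreasing. Hence
\[
p^+_{\mathcal{C}_R}=\sup_{R/2<|x|<R}\Bigl(3+\frac{3R_0}{|x|}\Bigr)=3+\frac{6R_0}{R}.
\]
(It is a supremum, not attained on the open annulus, but that is irrelevant.)

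\emph{Step 2: estimate the exponent.} We compute
\[
1-\frac{3}{p^+_{\mathcal{C}_R}}=1-\frac{3}{3+6R_0/R}=\frac{6R_0/R}{3+6R_0/R}=\frac{2R_0}{R+2R_0}\le\frac{2R_0}{R}.
\]
Therefore
\[
R^{1-3/p^+_{\mathcal{C}_R}}\le R^{2R_0/R}=\exp\Bigl(\frac{2R_0\log R}{R}\Bigr).
\]
The main point, and the only nontrivial one, is that $\frac{\log R}{R}\to0$ as $R\to\infty$. Since $R_0$ is fixed, the right-hand side therefore tends to $e^0=1$. In particular it is bounded: for $R\ge 2R_0^2$, the function $\frac{\log R}{R}$ is at most $1/e$, so the bound $e^{2R_0/e}$ holds uniformly.

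\emph{Step 3: conclude.} We have $0\le R^{1-3/p^+_{\mathcal{C}_R}}h(R)\le e^{2R_0\log R/R}\,h(R)$. The first factor tends to $1$ and $h(R)\to0$, so the product tends to $0$.

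No real obstacle is expected. The only care needed is checking that the annulus lies entirely in the region $|x|\ge R_0$, which is exactly what the hypothesis $R\ge 2R_0^2$ guarantees with room to spare, and computing the sign and size of the exponent correctly.
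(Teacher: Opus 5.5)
Your proof is correct and follows essentially the same route as the paper. Like the paper, you identify $p^+_{\mathcal{C}_R}$ as the value of the radially decreasing exponent at the inner radius $R/2$, show that $1-3/p^+_{\mathcal{C}_R}=O(1/R)$ so that $R^{1-3/p^+_{\mathcal{C}_R}}=\exp(O(\log R/R))$ stays bounded, and then multiply by $h(R)\to 0$. The difference is that you compute $p^+_{\mathcal{C}_R}=3+6R_0/R$ and the exponent $2R_0/(R+2R_0)$ exactly, whereas the paper uses only the decay bound $|p(x)-3|\le C/|x|$ and a Taylor expansion. The paper needs that generality because the lemma is actually invoked for the general exponent of Theorem \ref{prop.1}; your algebra carries over directly via $1-3/(3+\varepsilon)\le \varepsilon/3$.
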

\begin{proof} 
Let $x\in\mathcal{C}( R) $. 
Since $p(\cdot)$ is continuous and radially decreasing, there exists a decreasing and continuous function
$\tilde{p} : [0, \infty) \to [3, 6]$ such that $ p(x) = \tilde{p}(|x|) $ for all $ x \in \mathbb{R}^3 $.
For each $ x \in\mathcal{C}( R)  $, we have $|x| > R/2$, hence
\[
p(x) = \tilde{p}(|x|) \leq \tilde{p}(R/2),
\]
and therefore we can  write
\[
\operatorname{ess\,sup}_{x \in\mathcal{C}( R) } p(x) \leq \tilde{p}(R/2).
\]
Consider $\varepsilon > 0$. By continuity of $\tilde{p}$ at $R/2$, there exists $\delta > 0$ such that
\[
r \in (R/2, R/2 + \delta) \;\Rightarrow\; |\tilde{p}(r) - \tilde{p}(R/2)| < \varepsilon,
\]
which implies $\tilde{p}(r) > \tilde{p}(R/2) - \varepsilon$.
To continue, we define
\[
A(\delta,R) := \{x \in \mathbb{R}^3 : R/2 < |x| < R/2 + \delta\}.
\]
Then $A(\delta,R) \subset\mathcal{C}( R) $, and for all $x \in A_\delta$, we have
\[
p(x) > \tilde{p}(R/2) - \varepsilon.
\]
Moreover, since set $A(\delta,R)$ has positive measure,   we can write
\[
\operatorname{ess\,sup}_{x \in\mathcal{C}( R) } p(x) \geq \tilde{p}(R/2) - \varepsilon.
\]
Since $\varepsilon>0$ is arbitrary, we conclude
$
p_{\mathcal{C}_R}^+ = \tilde{p}(R/2).
$ 
Now since $R \geq 2R_0^2$, we have $R/2 \geq R_0^2$, and by assumption, we have 
\[
\left| \tilde p(R/2) - 3 \right| \leq \frac{C}{R/2} = \frac{2C}{R}<1.
\]
Defining $\varepsilon_R := \tilde p(R/2) - 3$, we conclude  
that there exists a sequence $(\varepsilon_R)$ such that
\[
p_{\mathcal{C}_R}^+ = 3 + \varepsilon_R, 
\quad 0<\varepsilon_R \le \frac{2C}{R}<1.
\]
In particular, note that 
\[
p_{\mathcal{C}_R}^+ = 3 + O\!\left( \frac{1}{R}\right).
\]

To continue, define $
\omega_R := 1 - \frac{3}{p_{\mathcal{C}_R}^+}$.   Thus, we can write 
\[
\frac{3}{p_{\mathcal{C}_R}^+}
= \frac{3}{3 + \varepsilon_R}
= \frac{1}{1 + \frac{\varepsilon_R}{3}}.
\]
Now, by using the Taylor expansion for $t = \varepsilon_R/3 < 1$, we get 
\[
\frac{1}{1+t} = 1 - t + O(t^2),
\]
and then we conclude 
\[
\frac{3}{p_{\mathcal{C}_R}^+}
= 1 - \frac{\varepsilon_R}{3} + O(\varepsilon_R^2).
\]
Therefore, we can write 
\[
\omega_R
= 1 - \frac{3}{p_{\mathcal{C}_R}^+}
= \frac{\varepsilon_R}{3} + O(\varepsilon_R^2)
= O\!\left(\frac{1}{R}\right).
\]
Now, using the identity 
$
R^{\omega_R} = \exp(\omega_R \ln R),
$ 
and the fact that $\omega_R = O(1/R)$, we get
\[
\omega_R \ln R = O\!\left(\frac{\ln R}{R}\right) \to 0.
\]
Thus, considering $s:=\omega_R \ln R$ and using the expansion $e^s = 1 + O(s)$ as $s \to 0$, we conclude
\[
R^{\omega_R} = 1 + O\!\left(\frac{\ln R}{R}\right).
\]
Provided with this information   we can write
\[
R^{1 - \frac{3}{p_{\mathcal{C}_R}^+}} = R^{\omega_R}
= 1 + \alpha_R,
\]
where $\alpha_R = O\!\left(\frac{\ln R}{R}\right)$.
Thus, we obtain 
\[
R^{1 - \frac{3}{p_{\mathcal{C}_R}^+}} h(R)
= (1 + \alpha_R) h(R)
= h(R) + \alpha_R h(R).
\]
Since $h(R) \to 0$ and $(\alpha_R)$ is bounded for $R \geq 1$, we have
\[
|\alpha_R h(R)| \le C |h(R)| \to 0 \quad \text{as } R \to \infty.
\]
Thus, conclude 
$ 
R^{1 - \frac{3}{p_{\mathcal{C}_R}^+}} g(R) \to 0   \text{ as } R \to \infty,
$ 
and we finish the proof. 
\end{proof}
\section{Proofs of main theorems}\label{sec:3} 
\begin{proof}[Proof of Theorem \ref{thm1}]
To begin note that, since $\vu  \in \dot{H}^1(\mathbb{R}^3) $, we have $\vu \in L^6(\mathbb{R}^3) $. Then, considering that $L^6(\mathbb{R}^3) \subset L^6_{\text{loc} }(\mathbb{R}^3)  \subset L^3_{\text{loc} } (\mathbb{R}^3)$, 
thus, by Theorem X.1.1 in \cite{galdi2011introduction} we conclude that $\vu$ and $P$  are in fact a couple of regular functions. 
To continue, let $\psi\in {\mathcal{C}}^\infty_0 (\mathbb{R}^3) $ be a cut-off function such that $0<\psi<1$, $\psi(x)=1$ if $|x|<\frac{1}{2}$, $\psi(x)=0$ if $|x|>1$. Given $R>1$, {\color{black}we define the function $\psi_R$ by $\psi_R(x)=\psi(\frac{|x|}{R})$}: thus, $\psi_R(x)=1$ if $|x|< \frac R 2$ and $\psi_R(x)=0$ if $|x|\geq R$.
 Then, by testing the 3d stationary Navier-Stokes equations \eqref{SNS} with $\psi_R \vu$ and using the fact that $\supp(\psi_R \vu)\subseteq B(R):=B(0,R)$ we obtain \footnote{Note that, since the couple $\vu$ and $\pi$ are regular, the terms involved in the equality   are well-defined.}
\begin{equation}\label{eq. 3}
\int_{B(R)}-\Delta \vu \cdot\left(\psi_R \vu\right)+(\vu \cdot \vn) \vu \cdot\left(\psi_R \vu\right)+\vn \pi \cdot\left(\psi_R \vu\right) d x=0.
\end{equation}
Then, by using the divergence-free condition $\nabla \cdot \vu=0$ and integration by parts, we obtain 
the identity 
\begin{equation*}
\int_{B(R)} \psi_R|\vn \otimes \vu|^2 dx=\int_{B(R)} \Delta \psi_R \frac{|\vu|^2}{2} dx+
\int_{B(R)} \vn \psi_R \cdot\left[\left(\pi+\frac{|\vu|^2}{2}\right) \vu\right]dx.
\end{equation*}
Then, 
since  $\psi_R(x)= 1$ if $|x|< \frac R 2$, we can write
\begin{equation}\label{ineqmother}
\int_{B(R/2)}|\vn \otimes \vu|^2 dx 
 \leq  
\int_{B(R)} \Delta \psi_R \frac{|\vu|^2}{2} dx+ \int_{B(R)} \vn \psi_R \cdot\left[\left(\pi+\frac{|\vu|^2}{2}\right) \vu\right]dx 
=: 
I_1(R)+I_2(R)
. 
\end{equation}
In  the following we will prove 
\begin{equation}\label{LimitsAlphaBeta}
\displaystyle \lim_{R\to +\infty}  |I_1(R)|=\lim_{R\to +\infty}  |I_2(R)|=0.
\end{equation}
{\bf Limit for $I_1(R)$}. For studying the term $I_1(R)$, since the support of $\Delta\psi_R$ is contained in $C(R)$,  the H\"older inequality with 
\begin{equation}\label{variable_exponents_1}
1=\frac{2}{6}+\frac{2}{3}
\end{equation} yields  
\begin{equation}\label{202618aveq32_T1}
|I_1(R)|\leq C
\|\Delta\psi_R\|_{L^{\frac 3 2}(C( R) )}
\|\vu\|_{L^{6}( C( R)  )}^2.
\end{equation}
 In order to control the quantity $\|\Delta \psi_R \|_{L^{  \frac 3 2 }(\mathcal{C}( R) )}$ above,
we write
\begin{equation}\label{EstimationNormeLinfiniTheta_R1}
\|\Delta \psi_R \|_{L^{ \frac 3 2 }(C( R) )}
\leq 
C\|\Delta \psi_R  \|_{L^{\infty}  (C (R) )
}\|1\|_{L^{\frac 3 2 }(C( R) )}.
\end{equation}
Now, considering  the definition of  $\psi_R$,  we get 
$$
\|\Delta \psi_R  \|_{L^{\infty}
( C(R) )
}\leq CR^{-2}$$
 and we obtain
$$\|\Delta \psi_R \|_{L^{ \frac 3 2 }
(\mathcal{C}(  R) )}\leq 
C R^{-2}\|1\|_{L^{ \frac 3 2 }(C(  R) )}
.$$
Now, by stressing the fact   that 
$|
\mathcal{C}( R) 
|=CR^3$ and $R>1$, we obtain
\begin{align}\label{EstimationC1}
\|\Delta \psi_R \|_{L^{\frac 3 2 }(C( R) )}&\leq
  C R^{-2+\frac 3{ \frac 3 2 }} =C
  ,
\end{align}
and thus, we get  
\begin{equation}\label{estimateI12}
|I_1(R)|\leq 
  C  
\|\vu\|_{L^{ 6 }( \mathcal{C}( R)  )}^2.
\end{equation}
Then, since $\|\vu\|_{L^{6}( \mathcal{C}( R)  )} \to 0$ as $R\to +\infty$, we conclude 
$
I_1(R) \to_{R\to +\infty} 0. 
$ 

 \noindent {\bf Limit for $I_2(R)$}. Note that, by considering the definition of $\psi_R$ we know that $\supp( \vn \psi_R)\subset\mathcal{C}( R) $ {and it is radial}. Thus, we can write 
{
\color{black}
\begin{eqnarray}
\left|I_2(R)\right|&=&\left|\int_{B(R)} \vn \psi_R \cdot\left[\left(\pi+\frac{|\vu|^2}{2}\right) \vu\right]dx\right|\notag\\
&=& 
\left|\int_{B(R)} 
\left(\pi+\frac{|\vu|^2}{2}\right) 
\vu_\rho
 \partial_\rho\psi_R 
 dx\right|
\\
& \leq &\frac{1}{2}  
\int_{\mathcal{C}( R) }|\vu|^2
| \vu_\rho |
| \partial_\rho\psi_R |
 dx 
+
 \int_{\mathcal{C}( R) } 
 |\pi |
 | \vu_\rho |
| \partial_\rho\psi_R |  
  dx 
=: I_{21}(R)+I_{22}(R)
.
\end{eqnarray}
}

With this at hand in the following we aim to prove 
$$
\displaystyle \lim_{R\to +\infty}I_{21}(R)= \lim_{R\to +\infty} I_{22}(R)=0.
$$
To deal with the term $I_{21}(R)$, by the H\"older inequality with 
$ 
1=\frac{2}{6}+\frac{1}{p} +\frac{1}{q},
$ 
where $p^{-1}+q^{-1}=2/3$
we write 
\begin{align}\label{forheuristic1}
I_{21}(R)= \frac{1}{2}  
\int_{\mathcal{C}( R) }|\vu|^2
| \vu_\rho |
| \partial_\rho\psi_R |
 dx 
&\leq   C
\|\vu \|_{L^{ 6 }(\mathcal{C}( R)  )}^2
\|u_\rho \|_{L^{ p }(\mathcal{C}( R) )}
\|\partial_\rho \psi_R \|_{L^{ q }(\mathcal{C}( R) )}
. 
\end{align}
Since,   $ \| \partial_\rho \psi_R  \|_{L^{\infty}}\leq CR^{-1}$ and $R>1$,  we obtain  
\begin{align}
\| \partial_\rho \psi_R \|_{L^{q}(\mathcal{C}( R) )}
&
\leq 
C\|\partial_\rho \psi_R  \|_{L^{\infty}  }
\|1\|_{L^{ q }(C( R) )}
=
 C R^{-1+\frac 3{q}}  
 ,
\end{align}
Now, considering that $q^{-1}=2/3 - p^{-1}$ and $3/2<p\leq 3$, we have $3 \leq q <+\infty$, and thus we get the uniform bound,  
\begin{equation}
\| \partial_\rho \psi_R \|_{L^{q}(\mathcal{C}( R) )} \leq  C, \quad 
\text{ for all }R> 1.
\end{equation}
By gathering the estimates together, we can write
\begin{equation}
I_{21}(R)\leq 
  C  
  \|u_\rho \|_{L^{ p }(\mathcal{C}( R) )}
\|\vu \|_{L^{6}( \mathcal{C}( R)  )}^2
\leq 
  C  
  \|u_\rho \|_{L^{ p }(\R^3 )}
\|\vu \|_{L^{6}( \mathcal{C}( R)  )}^2
. 
\end{equation}
Thus, with this information at hand and since $\|u\|_{L^{6}( \mathcal{C}( R)  )} \to 0$ as $R$ evolves to $+\infty$, we conclude $I_{21}(R) \to 0$ as ${R\to +\infty}$.
Now we analyze the term $I_{22}(R)$. Considering H\"older inequalities with $ \frac{2}{6}+\frac{1}{p } + \frac 1 q =1$, where $p^{-1}+q^{-1}=2/3$,  by arguing in the same manner than before, we  get the estimates 
\begin{align}\label{forheuristic2}
I_{22}(R)
= 
\int_{\mathcal{C}( R) } 
 |\pi |
 | \vu_\rho |
| \partial_\rho\psi_R |  
  dx 
&\leq 
C 
\|\pi \|_{L^{\frac{6}{2}}(C( R) )}
\|  \partial_\rho\psi_R \|_{L^{q }(C( R) )} 
\| \vu_\rho \|_{L^{p}( C ( R) )} 
\\
&\leq 
 C R^{-1+\frac 3{q }}
\|\pi \|_{L^{\frac{6 }{2}}( C( R) )}\| \vu_\rho \|_{L^{p}( C( R) )}
\\
&=
 C 
\|\pi \|_{L^{\frac{6 }{2}}( \R^3  )}\| \vu_\rho \|_{L^{p}(  C( R) )}
.
\end{align}
%
Now,  by using the divergence-free property of $\vu$, we get the following identity for the pressure term
$ \displaystyle{\pi=\sum_{i,j=1}^3\mathcal{R}_i\mathcal{R}_i(u_iu_j)}
,
$ 
where $\mathcal{R}_i$ stands for the usual Riesz transforms.
Then, gathering this relationship with the hypothesis $\vu\in L^{6}(\R^3 )$ and the fact that 
 the Riesz transform are bounded in $L^{r}$ spaces for $r\in (1,\infty)$,  we  conclude   
  \begin{equation}
  \|\pi \|_{L^{ 3}( \R^3 )} \leq 
  C 
    \|u \|^2_{L^{ 6}( \R^3 )  } <+\infty
 ,
  \end{equation}
and then, we can write  
    \begin{align}
I_{22}(R)\leq 
 C  
 \|\vu \|^2_{L^{6}( \R^3 )}
\| \vu_\rho \|_{L^{p}( \R^3 )}
.
\end{align}
Then, considering that   
$\|\vu_\rho\|_{L^{p}( \mathcal{C}( R)   )}\to 0$ as  $R\to +\infty$, 
we get  $ \lim_{R\to +\infty}
I_{22}(R) 
=0$.  Hence, by gathering the limits for $I_{21}(R)$ and $I_{22}(R)$, we deduce the desired limit for $I_{2}(R) $.\\

Thus, by mixing
  the limits for $I_{1}(R)$ and $I_{2}(R)$ with the estimate \eqref{ineqmother}, we conclude
\begin{equation}\label{LimiteSobolev}
\lim_{R\to +\infty}\int_{B(R/2)}|\vn \otimes \vu|^2 dx=\|\vu\|_{\dot{H}^1}=0.
\end{equation}
Then, by considering Sobolev embeddings, we get $\|\vu\|_{L^6}=0$, and in consequence  $\vu= 0$. 
\end{proof}


As noted earlier, the proof of Theorem \ref{thm1.2} follows from a more general Liouville-type result that exploits the flexibility offered by the functional setting of variable-exponent Lebesgue spaces. This result is stated as follows.

\begin{theorem}\label{prop.1}
Let $R_0 > 1$ be fixed, let $u \in \dot{H}^1(\mathbb{R}^3)$ be a solution of \eqref{SNS} and $u_\rho(x)$ be the radial component of such solution in spherical coordinates. Let $p:\mathbb{R}^3 \to [3,6]$ be a continuous and radially decreasing variable exponent such that:
\begin{enumerate}
\item $p(x) = 6$ for all $x \in B(0,R_0)$,
\item there exists a constant $C \in [0, R_0^2)$ such that
$
|p(x) - 3| \le \frac{C}{|x|}, \  \text{for all } |x| \ge R_0^2.
$
\end{enumerate}
If, in addition  $u_\rho \in L^{p(\cdot)}(\mathbb{R}^3)$, then $u \equiv 0$.
\end{theorem}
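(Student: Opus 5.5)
We follow the scheme of the proof of Theorem \ref{thm1} verbatim up to \eqref{ineqmother}. Since $u\in\dot H^1\subset L^6$, $u$ and $\pi$ are regular. Testing \eqref{SNS} with $\psi_R u$ gives
\[
\int_{B(R/2)}|\nabla\otimes u|^2\,dx\le I_1(R)+I_2(R),
\]
and it suffices to show that $I_1(R)\to0$ and $I_2(R)\to0$ along $R\to\infty$ with $R\ge 2R_0^2$. The term $I_1(R)$ involves only $u$, so \eqref{estimateI12} applies unchanged and gives $I_1(R)\to0$.

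For $I_2(R)$ we again use that $\nabla\psi_R$ is radial and supported in $\mathcal C(R)$. This reduces the problem to
\[
I_{21}(R)=\tfrac12\int_{\mathcal C(R)}|u|^2|u_\rho||\partial_\rho\psi_R|\,dx,\qquad
I_{22}(R)=\int_{\mathcal C(R)}|\pi||u_\rho||\partial_\rho\psi_R|\,dx .
\]
We apply the variable-exponent H\"older inequality \eqref{eq2.3} on $\mathcal C(R)$ with
\[
1=\frac{2}{6}+\frac1{p(x)}+\frac1{q(x)},\qquad \frac1{q(x)}=\frac23-\frac1{p(x)}.
\]
Since $3\le p\le 6$, we have $2\le q(x)\le 3$; in particular $q$ is bounded and $q^->1$. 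For $I_{22}$ the factor $\|u\|_{L^6}^2$ is replaced by $\|\pi\|_{L^3}\le C\|u\|_{L^6}^2$, using the Riesz transform identity for $\pi$ as before. In both cases this yields
\[
I_{2j}(R)\le C\,\|u\|_{L^6(\mathbb R^3)}^2\,\|u_\rho\|_{L^{p(\cdot)}(\mathcal C(R))}\,\|\partial_\rho\psi_R\|_{L^{q(\cdot)}(\mathcal C(R))}.
\]

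The remaining factor is estimated through Lemmas \ref{lemmaLinftyLpvariable} and \ref{PropositionLpplusminus}:
\[
\|\partial_\rho\psi_R\|_{L^{q(\cdot)}(\mathcal C(R))}\le CR^{-1}\max\{R^{3/q^-_{\mathcal C_R}},R^{3/q^+_{\mathcal C_R}}\}=CR^{-1+3/q^-_{\mathcal C_R}},
\]
since $R>1$ and $|\mathcal C(R)|\sim R^3$. Because $1/q=2/3-1/p$ is increasing in $p$, the smallest value $q^-_{\mathcal C_R}$ is attained where $p$ is largest, namely at $p^+_{\mathcal C_R}$. Hence
\[
\frac{3}{q^-_{\mathcal C_R}}=2-\frac{3}{p^+_{\mathcal C_R}},
\qquad\text{so}\qquad
R^{-1+3/q^-_{\mathcal C_R}}=R^{1-\frac{3}{p^+_{\mathcal C_R}}}.
\]
This is exactly the quantity treated in Lemma \ref{prop:main_limit_thmover_new}. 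That lemma's proof uses only continuity, radial monotonicity and the decay condition (2), all of which hold under the present hypotheses.

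We then set
\[
h(R):=\|u_\rho\|_{L^{p(\cdot)}(\mathcal C(R))}.
\]
By Lemma \ref{Lemma.utendzero}, $h(R)\to0$, and Lemma \ref{prop:main_limit_thmover_new} then gives $R^{1-3/p^+_{\mathcal C_R}}h(R)\to0$. Therefore $I_{21}(R)\to0$ and $I_{22}(R)\to0$. It follows that $\|\nabla u\|_{L^2}=0$, so $u\equiv0$ by the Sobolev embedding.

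The main obstacle is this last step, the growth of $\|\partial_\rho\psi_R\|_{L^{q(\cdot)}}$. In contrast to the constant-exponent case, where the bound is uniform because $p\le3$, here $p>3$ forces a factor $R^{1-3/p^+}$ that is unbounded a priori. It is absorbed only because condition (2) makes $p^+_{\mathcal C_R}-3=O(1/R)$, so that $R^{O(1/R)}$ stays bounded.

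One point needs care. For $R\ge 2R_0^2$ we need $R/2\ge R_0^2$, so that $p^+_{\mathcal C_R}=\tilde p(R/2)$ lies in the decay regime. If some $\varepsilon_R=0$, the bound simply improves, so the argument is unaffected.
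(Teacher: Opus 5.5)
Your proposal is correct and follows the paper's proof essentially step by step. It uses the same localized energy inequality, the same H\"older split $1=\frac{2}{6}+\frac{1}{p(\cdot)}+\frac{1}{q(\cdot)}$ on $\mathcal C(R)$, the same exponent identity $-1+3/q^-_{\mathcal C_R}=1-3/p^+_{\mathcal C_R}$, and Lemma~\ref{prop:main_limit_thmover_new} to neutralize the factor $R^{1-3/p^+_{\mathcal C_R}}$. Your handling of $I_1(R)$ via the $L^6$ bound \eqref{estimateI12}, and of regularity directly from $u\in L^6$, is if anything cleaner than the paper's, which phrases these steps through $\|u\|_{L^{p(\cdot)}(\mathcal C(R))}$ even though only $u_\rho$ is assumed to lie in $L^{p(\cdot)}$.
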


\begin{proof}
 Let  $\vu\in \dot H^1(\R^3) 
   $ be a solution of the 3d stationary Navier-Stokes equations. By considering  Theorem 2.51 in \cite{CRUZ}, we know the inclusions
$$L^{p(\cdot)}(\R^3 )\subset L^{p^-}(\R^3 )+L^{p^+}(\R^3 )
\subset L^{p^-}_{loc}(\R^3 )+L^{p^+}_{loc}(\R^3 )
.$$
Now, by the hypothesis assumed on the variable exponent $p(\cdot)$ we  have $3<9/2\leq p^-\leq p^+\leq 6 $, and  then,  we can deduce  
$$u\in L^{p(\cdot)}(\R^3 ) \subset L^3_{loc}(\R^3 ).$$
Thus, by Theorem X.1.1 in \cite{galdi2011introduction} we conclude that $(\vu,P)$ is in fact a couple of regular functions. By testing the stationary Navier-Stokes equations \eqref{SNS} with $\phi_R \vu$, where $\phi_R $ is the cut-off function defined in the proof of Theorem 1, we get (after some integration by parts)
\begin{equation}\label{ineqmother2}
\int_{B(R/2)}|\vn \otimes \vu|^2 dx 
 \leq  
\int_{B(R)} \Delta \psi_R \frac{|\vu|^2}{2} dx+ \int_{B(R)} \vn \psi_R \cdot\left[\left(\pi+\frac{|\vu|^2}{2}\right) \vu\right]dx 
=: 
J_1(R)+J_2(R)
. 
\end{equation}
In  the following we will prove 
$ \lim_{R\to +\infty}  |J_1(R)|=\lim_{R\to +\infty}  |J_2(R)|=0$.

\noindent {\bf Limit for $J_1(R)$}.
 For studying the term $J_1(R)$ in \eqref{ineqmother2}, the H\"older inequality with 
$
1=\frac{2}{p(\cdot)}+\frac{1}{q(\cdot)}
$ and Lemma \ref{lemmaLinftyLpvariable}
 yield the  estimates
\footnote{Considering the definition of the cut-off function  $\phi_R$   and Lemma  \ref{lemmaLinftyLpvariable} is straightforward to see that such functions and its partial derivatives belongs to the variable Lebesgue spaces considered here.} 
\begin{align}\label{18aveq32_T1_92_exterior_bola}
|I_1(R)|
&\leq
C\|\Delta \phi_R \|_{L^{q(\cdot)}( \mathcal{C}( R)  )}
\|\vu\|_{L^{p(\cdot)}( \mathcal{C}( R)  )}^2
\\
&\leq 
C
\|\Delta \phi_R  \|_{L^{\infty}  (\mathcal{C}( R)  )}\|1\|_{L^{q_{\mathcal{C}( R)  }(\cdot)}(\mathcal{C}( R)  )}
\|\vu\|_{L^{p(\cdot)}( \mathcal{C}( R)  )}^2
.
\end{align}
Now, considering  the definition of  $\phi_R$,  we get 
$
\|\Delta \phi_R  \|_{L^{\infty}
( \mathcal{C}( R)   )
}\leq CR^{-2}$.
Gathering this  with Lemma \ref{PropositionLpplusminus}
and the fact that
$|
\mathcal{C}( R)  
|=CR^3$  ($R>1$), we obtain the estimates 

\begin{align}\label{EstimationC01_92_exterior_bola}
\|\Delta \phi_R \|_{L^{q_{\mathcal{C}( R)  }(\cdot)}(\mathcal{C}( R)  )}&\leq
 C R^{-2}\max \{|\mathcal{C}( R)  |^{\frac 3 {q_{\mathcal{C}( R)  }^- }}, |\mathcal{C}( R)  |^{\frac 3 {q_{\mathcal{C}( R)  }^+}}\}
 \\ & \leq
  C\max \{R^{-2+\frac 3{q_{\mathcal{C}( R)  }^-}}, R^{-2+\frac 3 {q_{\mathcal{C}( R)  }^+}}\}
\\ & =
  C R^{-2+\frac 3{q_{\mathcal{C}( R)  }^-}} 
 .
\end{align}
Then, we can write
\begin{equation}\label{estimateI1_92_exterior_bola}
|J_1(R)|\leq 
  C R^{-2+\frac 3{q_{\mathcal{C}( R)  }^-}} 
\|\vu\|_{L^{p(\cdot)}( \mathcal{C}( R)  )}^2.
\end{equation}
Then,  considering that $q(\cdot)_{\mathcal{C}( R)  }$ is the conjugate exponent of $p(\cdot)_{\mathcal{C}( R)  }$  and the fact that $\|\vu\|_{L^{p(\cdot)}( \mathcal{C}( R)   )}\to 0$ as  $R\to +\infty$, we conclude 
$$
\lim_{R\to +\infty} J_1(R)  =  0. 
$$

\noindent {\bf Limit for $J_2(R)$}. 
 Note that, by means of  the definition of $\psi_R$ we know that $\supp( \vn \psi_R)\subset\mathcal{C}( R) $ {\color{black}and it is radial}. Thus, we can write 
{
\color{black}
\begin{eqnarray}
\left|I_2(R)\right|
& \leq &\frac{1}{2}  
\int_{\mathcal{C}( R) }|\vu|^2
| \vu_\rho |
| \partial_\rho\psi_R |
 dx 
+
 \int_{\mathcal{C}( R) } 
 |\pi |
 | \vu_\rho |
| \partial_\rho\psi_R |  
  dx 
=: J_{21}(R)+J_{22}(R)
.
\end{eqnarray}
}
With this at hand in the following we aim to prove 
$$
\displaystyle \lim_{R\to +\infty}J_{21}(R)= \lim_{R\to +\infty} J_{22}(R)=0.
$$
To deal with the term $J_{21}(R)$, by the H\"older inequality with 
$ 
1=\frac{2}{6}+\frac{1}{p(\cdot)} +\frac{1}{q(\cdot)},
$ 
where $p^{-1}(\cdot)+q^{-1}(\cdot)=2/3$, 
we write 
\begin{align}
I_{21}(R)= \frac{1}{2}  
\int_{\mathcal{C}( R) }|\vu|^2
| \vu_\rho |
| \partial_\rho\psi_R |
 dx 
&\leq   C
\|\vu \|_{L^{ 6 }(\mathcal{C}( R)  )}^2
\|u_\rho \|_{L^{ p (\cdot) }(\mathcal{C}( R) )}
\|\partial_\rho \psi_R \|_{L^{ q (\cdot) }(\mathcal{C}( R) )}
. 
\end{align}
Since $ \| \partial_\rho \psi_R   \|_{L^{\infty}}\leq CR^{-1}$ and $R>1$, following the same ideas than before, we obtain  
\begin{align}
\| \partial_\rho \psi_R   \|_{L^{q(\cdot)}(\mathcal{C}( R)  )}
&\leq
 C\max \{R^{-1+\frac 3{q_{\mathcal{C}( R)  }^-}}, R^{-1+\frac 3 {q_{\mathcal{C}( R)  }^+}}\} 
=
 C R^{-1+\frac 3{q_{\mathcal{C}( R)  }^-}}
 ,
  \label{EstimationBeta1C_92_exterior_bola}
\end{align}
and then, considering the fact that $q(\cdot)$ is the conjugate exponent of $p(\cdot)$ we get 
\begin{align}\label{estimateI21_92_exterior_bola}
J_{21}(R)
&\leq 
  C R^{-1+\frac 3{q_{\mathcal{C}( R)  }^-}}
\|\vu \|_{L^{ 6 }(\mathcal{C}( R)  )}^2
\|u_\rho \|_{L^{ p (\cdot) }(\mathcal{C}( R) )}
\\ &\leq 
  C R^{1-\frac 3{p_{\mathcal{C}( R)  }^+}}
\|\vu \|_{L^{ 6 }(\mathcal{C}( R)  )}^2
\|u_\rho \|_{L^{ p (\cdot) }(\R^3 )}
. 
\end{align}
Then, since  
$\|\vu\|_{L^{6}( \mathcal{C}( R)   )}\to 0$ as  $R\to +\infty$, by considering Lemma \ref{prop:main_limit_thmover_new} we obtain
$$\lim_{R\to +\infty} R^{1-\frac 3{p_{\mathcal{C}( R)  }^+}}
\|\vu \|_{L^{ 6 }(\mathcal{C}( R)  )}^2
\|u_\rho \|_{L^{ p (\cdot) }(\R^3 )}
= 0,$$
and therefore $$
\lim_{R\to +\infty} J_{21}(R) = 0.$$ 
Now we analyze the term $J_{22}(R)$. Considering H\"older inequalities with $ \frac{2}{6}+\frac{1}{p (\cdot) } + \frac 1 {q(\cdot)} =1$, where $p^{-1}(\cdot)+q^{-1}(\cdot)=2/3$,  by arguing in the same manner than before, we  get the estimates 
\begin{align}
J_{22}(R)
= 
\int_{\mathcal{C}( R) } 
 |\pi |
 | \vu_\rho |
| \partial_\rho\psi_R |  
  dx 
&\leq 
C 
\|\pi \|_{L^{\frac{6}{2}}(C( R) )}
\|  \partial_\rho\psi_R \|_{L^{q (\cdot)  }(C( R) )} 
\| \vu_\rho \|_{L^{p(\cdot) }( C ( R) )} 
\\
&\leq 
 C  R^{-1-\frac 3{q_{\mathcal{C}( R)  }^+}}
\|\pi \|_{L^{\frac{6 }{2}}( C( R) )}\| \vu_\rho \|_{L^{p(\cdot)}( C( R) )}
\\
&=
 C R^{1-\frac 3{p_{\mathcal{C}( R)  }^+}}
\|\pi \|_{L^{\frac{6 }{2}}( C( R) )}\| \vu_\rho \|_{L^{p(\cdot)}(\mathcal{C}( R)  )}
.
\end{align}
Then, considering the identity for the pressure term $\pi=\sum_{i,j=1}^3\mathcal{R}_i\mathcal{R}_i(u_iu_j)$  and the fact that the Riesz transforms are bounded in the classical Lebesgue spaces involved here, we can write 
  \begin{equation}
  \|\pi \|_{L^{ 3}( \R^3 )} \leq 
  C 
    \|u \|^2_{L^{ 6}( \R^3 )  } 
  ,
  \end{equation}
and hence  we conclude
  \begin{align}
J_{22}(R)\leq 
 C R^{1-\frac 3{p_{\mathcal{C}( R)  }^+}}
 \|\vu \|^2_{L^{6}( \R^3 )}
\| \vu_\rho \|_{L^{p(\cdot)}(\mathcal{C}( R)  )}
.
\end{align}
Thus, considering that 
$\|\vu_\rho\|_{L^{p(\cdot)}( \mathcal{C}( R)   )}\to 0$ as  $R\to +\infty$, by Lemma \ref{prop:main_limit_thmover_new} we get
 $$
\lim_{R\to +\infty} J_{22}(R) = 0.$$ 
Gathering all the information of these limits  with (\ref{ineqmother2}), we obtain
\begin{equation} 
\lim_{R\to +\infty}\int_{B(R/2)}|\vn \otimes \vu|^2 dx=\|\vu\|_{\dot{H}^1(\R^3)}=0.
\end{equation}
Thus, 
considering Sobolev embeddings we  get $\|\vu\|_{L^6}=0$ and thus  $\vu\equiv 0$. 
\end{proof}

\begin{proof}[Proof of Theorem \ref{thm1.2}]
Consider the variable exponent $p(\cdot)= 3 + \eta(\cdot)$. 
Then, for each $x\in \R^3$, we have $p(x)\geq 3$ and it is in fact a radial function. Furthermore,  given $|x|=R_0$, we can write 
    \[
p(R_0)=     3 +3 \frac{R_0}{R_0} = 3+ 3 = 6.
    \]
This proves that the variable exponent  $p(\cdot)$ is in fact  continuous.
To continue,  given $ |x| > R_0 $,  $ \frac{R_0}{|x|} $ is  a decreasing function, therefore $ p(\cdot) $ is also  decreasing function. Now, given  $ |x| \geq R_0^2 $, we get  
\[
\left| p(x) -  3 \right| =
 \left| 3 \frac{R_0}{|x|} \right| = C |x|^{-1},
\]
with $ C =  3 R_0 <R_0^2$ ($R_0>3$ by assumption). Provided with this, we stress the fact that this variable exponent $p(\cdot)$ fulfill the hypothesis of Theorem \ref{prop.1}, therefore if the radial  component of the velocity fielf $u_\rho$ belongs to $L^{p(\cdot)} (\R^3)$, we have $u\equiv 0$.
\end{proof}

 
 



\begin{thebibliography}{99}
%

\bibitem{Chae23}
{\sc D.~Chae}, {\em Anisotropic Liouville type theorem for the stationary Navier–Stokes
equations in $\R^3$}, 
Appl. Math. Lett. 142, Paper No. 108655, 5 pp. (2023).
%
\bibitem{ChaeWolf}
{\sc D.~Chae and J.~Wolf}, {\em On {L}iouville type theorems for the steady
  {N}avier-{S}tokes equations in {$\mathbb{R}^3$}}, J. Differential Equations,
  261 (2016), pp.~5541--5560.
%
\bibitem{CV2023} {\sc D. Chamorro and  G. Vergara-Hermosilla}, {\em  Liouville type theorems for stationary Navier-Stokes equations with Lebesgue spaces of variable exponent}, Documenta Mathematica (2025).
%
\bibitem{CRUZ}
{\sc D.~V. Cruz-Uribe and A.~Fiorenza}, {\em Variable {L}ebesgue spaces:
Foundations and harmonic analysis}, Springer Science \& Business Media, 2013.
%
\bibitem{Diening_Libro}
{\sc L.~Diening, P.~Harjulehto, P.~H{\"a}st{\"o} and M.~Ruzicka}, {\em
Lebesgue and Sobolev spaces with variable exponents}, Springer, 2011.
%
\bibitem{galdi2011introduction}
{\sc G.~Galdi}, {\em An introduction to the mathematical theory of the
{N}avier-{S}tokes equations: Steady-state problems}, Springer Science \&
Business Media, 2011.
%
\bibitem{GW78}
{\sc D. Gilbarg and  H.F.  Weinberger}, {\em Asymptotic properties of steady plane solutions
of the Navier-Stokes equations with bounded Dirichlet integral}, Ann. Scuola
Norm. Super. Pisa. CI. Sci. 5(4)  (1978), 381–404.
%
\bibitem{lemarie2016navier}
{\sc P.~G. Lemari{\'e}-Rieusset},
{\em The {N}avier-{S}tokes problem in the 21st century}, CRC press, 2016.
%
%
\bibitem{Lerner26}
{\sc N.~
Lerner},   {\em Wiener Algebras Methods for Liouville Theorems on the Stationary Navier-Stokes System}, arXiv preprint arXiv:2601.13916  (2026).
%
\bibitem{Ser2016}  
\textsc{G. Seregin}, \emph{A Liouville type theorem for steady-state Navier-Stokes equations}.  
J. {\'E}.D.P., Expos{\'e} no IX, (2016).
%
\bibitem{V2026}
 {\sc G. Vergara-Hermosilla}, \emph{Liouville theorems above the critical $9/2$ Threshold for stationary Navier-Stokes Equations}, arXiv preprint arXiv:2604.06527, 2026.
%
\bibitem{ZZ25}
 {\sc  Z. Zhang and  Q. Zu}, 
  {\em Two improved anisotropic Liouville type theorems for the stationary 3D Navier–Stokes equations}, 
Arch. Math. 124 (2025), 571–582.

\end{thebibliography}
 \end{document}